\newcommand{\tr}{ {\rm tr} }
\newcommand{\T}{\top}
\newcommand{\R}{\mathbb R}
\newcommand{\Exp}[1]{E_{\theta,\Sigma}^{#1}}
\newtheorem{theorem}{Theorem}
\newtheorem{lemma}{Lemma}
\newtheorem{corollary}{Corollary}
\begin{document}
\let\WriteBookmarks\relax
\def\floatpagepagefraction{1}
\def\textpagefraction{.001}
\shorttitle{}

\title [mode = title]{Covariance matrix estimation under data--based loss}                      



\author[1]{Dominique Fourdrinier}

\fnmark[1]
\ead{Dominique.Fourdrinier@univ-rouen.fr}
\credit{Conceptualization, Methodology, Supervision, Validation, Writing - review \& editing, Writing - original draft, Software}
\address[1]{Universit\'e de Normandie, UNIROUEN, UNIHAVRE, INSA Rouen, LITIS, avenue de l'Universit\'e, BP 12, 76801 Saint-\'Etienne-du-Rouvray, France.}
\author[2]{Anis M. Haddouche}
\cormark[1]
\fnmark[2]
\ead{Mohamed.haddouche@insa-rouen.fr}
\address[2]{INSA Rouen, LITIS and LMI, avenue de l'Universit\'e, BP 12, 76801 Saint-\'Etienne-du-Rouvray, France.}
\credit{Conceptualization, Methodology, Supervision, Validation, Writing - review \& editing, Writing - original draft, Software}
\author[3]{Fatiha Mezoued}
\fnmark[1]
\ead{famezoued@yahoo.fr}
\address[3]{\'Ecole Nationale Sup\'erieure de Statistique et d\textquoteright
	\'Economie Appliqu\'ee (ENSSEA), LAMOPS, Tipaza, Algeria.}
\credit{Conceptualization, Methodology, Supervision, Validation, Writing - review \& editing, Writing - original draft, Software}

\cortext[cor1]{Corresponding author}
\fntext[fn1]{Professor}
\fntext[fn2]{Temporarily associated to teaching and research.}

\begin{abstract}
In this paper, we consider the problem of estimating the $p\times p$ scale matrix $\Sigma$ of a multivariate linear regression model $Y=X\,\beta + \mathcal{E}\,$ when the distribution of the observed matrix $Y$ belongs to a large class of elliptically symmetric distributions.
After deriving the canonical form $(Z^{\T} U^{\T})^{\T}$ of this model, any estimator $\hat{ \Sigma}$ of $\Sigma$ is assessed through the data--based loss 
$ \tr\,(S^{+}\Sigma\, (\Sigma^{-1}\hat{\Sigma} - I_p)^2	)\,$
where  $S=U^{\T}U$ is the sample covariance matrix and $S^{+}$ is its Moore-Penrose inverse.
We provide alternative estimators to the usual estimators $a\,S$, where $a$ is a positive constant, which present smaller associated risk.
Compared to the usual quadratic loss $\tr (\Sigma^{-1}\hat{\Sigma} - I_p)^2$, we obtain a larger class of estimators and a wider class of elliptical distributions for which such an improvement occurs. A numerical study illustrates the theory.	
\end{abstract}

\begin{keywords}
 data--based loss \sep elliptically symmetric distributions\sep
high--dimensional statistics \sep orthogonally invariant estimators \sep  Stein--Haff type identities.
\MSC[2010]  \\ 62H12 \sep  62F10 \sep 62C99.
\end{keywords}

\maketitle


\section{Introduction}\label{Introduction}

Let consider the multivariate linear regression model,  with $p$ responses and $n$ observations,
\begin{align}\label{linear.model}
	Y=X\,\beta + \mathcal{E}\,,
\end{align}
where $Y$ is an $n\times p$ matrix, $X$ is an $n\times q$ matrix of known constants  of rank $q\leq n$ and $\beta$ is a $q\times p$ matrix of unknown parameters. We assume that the $n\times p$ noise matrix ${\mathcal{E}}$ has an elliptically symmetric distribution with density, with respect to the Lebesgue measure in $\R^{pn}$,  of the form
\begin{align}\label{noise.density}
	{\varepsilon} \mapsto |\Sigma|^{-n/2} \,f  \big( \tr ( \,{\varepsilon}\,\Sigma^{-1}{\varepsilon}^{\T })\big)\,,
\end{align}
where $\Sigma$ is a $p\times p$ unknown positive definite matrix and $f(\cdot)$ is a non--negative unknown function.

The model \eqref{linear.model} has been considered by various authors such as  \cite{kubokawa1999,Kubokawa2001}, who estimated $\Sigma$ and $\beta$ respectively in the context \eqref{noise.density}, and \cite{tsukuma2016unified} who estimated $\Sigma$ in the Gaussian setting. A common alternative representation of this model is $Y=M+\mathcal{E}$, where $\mathcal{E}$ is as above and $M$ is in the column space of $X$, has been also considered in the literature. See for instance  \cite{CanuFourdrinier2017} and  \cite{Candesetal2013}.

Although the matrix of regression coefficients $\beta$ is also unknown, we are interested in estimating the scale matrix $\Sigma$. We address this problem under a decision--theoretic framework  through a canonical form of the model \eqref{linear.model}, which
allows to use a sufficient statistic $S=U^{\T}\,U$ for $\Sigma$, where $U$ is an $(n-q) \times p$ matrix (see Section \ref{main.results} for more details). In this context, the  natural estimators   of $\Sigma$  are of the form 
\begin{align}\label{natural.estimators}
	\hat{\Sigma}_a=a\,S\,,
\end{align}
for some positive constants $a$.
 
As pointed out by  \cite{James1961a}, the estimators of the form \eqref{natural.estimators} perform poorly in the Gaussian setting. In fact, larger (smaller) eigenvalues of $\Sigma$ are overestimated (underestimated) by those estimators. 
Thus we may expect to improve these estimators by shrinking the eigenvalues of $S$, which gives rise to the class of orthogonaly invariant estimators (see \cite{Takemura1984}). Since  the seminal work of  \cite{James1961a}, this problem has been largely considered in the Gaussian setting. See, for instance,  \cite{tsukuma2016unified},  \cite{tsukuma2016} and \cite{ChetelatWells2016}. However, the elliptical setting has been considered by a few authors such as \cite{kubokawa1999},  \cite{HADDOUCHE2021104680}. 

In this paper,
the performance of any estimator $\hat{\Sigma}$ of $\Sigma$ is assessed through the data-based loss
\begin{align}\label{loss}
	L_S(\hat{\Sigma},\Sigma)  
	= \tr\,\big(S^{+}\Sigma\, \big(\Sigma^{-1}\hat{\Sigma} - I_p\big)^2	\big)\,
\end{align}
and its associated risk
\begin{align}\label{risk}
	R(\hat{\Sigma},\Sigma)
	=\Exp{}\big[ \tr\,\big(S^{+}\Sigma\, \big(\Sigma^{-1}\hat{\Sigma} - I_p\big)^2	\big)	\big]\,,
\end{align}
where  $\Exp{}$ denotes the expectation  with respect to the density specified below in \eqref{density} and where $S^{+}$ is the Moore--Penrose inverse of $S$.
Note that, when  $p > n-q$, $S$ is non--invertible and, 
 when  $p \leq n-q$, $S$ is invertible so that $S^{+}$ coincides with the regular inverse $S^{-1}$. 
This type of loss is called data--based loss in so far as it contains a part of the observation $U$ through $S=U^{\T}\,U$. The notion of  data--based loss was introduced by  \cite{EfronMorris1976} when estimating a location parameter. Likewise, \cite{FourdrinierStrawderman2015} showed the interest of considering such a data--based loss with respect to the usual quadratic losses. Also, the data--based loss \eqref{loss} was considered, in a Gaussian setting, by \cite{TsukumaKubokawa2015} who were motivated by the difficulty to handle with the standard quadratic loss  
\begin{align}\label{quadratic.loss}
	L(\hat{\Sigma},\Sigma)=\tr \big(\Sigma^{-1}\hat{\Sigma} - I_p\big)^2\,.
\end{align}
%
See \cite{Haff1980} and  \cite{tsukuma2016} for more details.
Thus the loss in \eqref{loss} is a data--based variant of the \eqref{quadratic.loss}, through which we aim to improve on the estimators $\hat{\Sigma}_{a}$ in \eqref{natural.estimators} by alternative estimators, focusing on improved  orthogonally invariant estimators.
Note that most improvement results in the Gaussian case were derived thanks to Stein--Haff types identities. Here, we specifically use the Stein--Haff type identity given by \cite{HADDOUCHE2021104680}, in the elliptical case, to establish our dominance result, which is well adapted to our unified approach of the cases $S$ invertible and $S$ non--invertible.

The rest of this paper is structured as follows. In Section \ref{main.results}, we give  improvement conditions of the proposed estimators over the usual estimators. In Section \ref{numerical.study}, we assess the quality of the proposed estimators through a simulation study in the context of the t--distribution. We also compare numerically our results with those of \cite{Konno2009} in the Gaussian setting. Finally, we give in an Appendix  all the proofs of our findings.

%
\section{Main results}\label{main.results}
Although we are interested in estimating the scale matrix $\Sigma$, recall that $\beta$ is a $q\times p$ matrix of unknown parameters. Note that, since $X$ has full column rank, the least square  estimator of $\beta$ is
$
\hat{\beta}=(X^{\top}X)^{-1}\,X^{\top}Y
;$
this is the maximum likelihood estimator in the Gaussian setting.
Natural estimators of the scale matrix $\Sigma$ are based on the residual sum of squares given by 
\begin{align}\label{S.1}
	S=Y^{\top}\,(I_n - P_{X})\,Y,
\end{align}
where $P_X=X\,(X^{\top}X)^{-1}\,X^{\top}$  is the orthogonal projector onto the subspace spanned by the columns of $X$. 

Following the lines of  \cite{kubokawa1999} and \cite{Tsukuma2020},
we derive the canonical form of the model \eqref{linear.model} which allows a suitable treatment of the estimation of $\Sigma$. Let $X=Q_1\,T^{\T}$ be the $QR$ decomposition of $X$ where $Q_1$ is a $n\times q$ semi-orthogonal matrix and $T$ a $q\times q$ lower triangular matrix with positive diagonal elements. Setting  $m=n-q$, there exists a $n\times m$ semi-orthogonal matrix $Q_2$ which completes $Q_1$ such that $Q=(Q_1 Q_2)$ is an $n\times n$ orthogonal matrix. Then, since 
\newpage
\begin{align*}
	Q_2^{\T}\,X\,\beta = Q_2^{\T}\,Q_1\,T^{\T}\,\beta= 0\
\end{align*}
we have 
\begin{align}\label{canonique.mode}
	Q^{\T}\,{Y}
	=
	\begin{pmatrix} 
		{Z} \\ {U}  
	\end{pmatrix}
	=
	\begin{pmatrix} 
		{Q_{1}^{\T}} \vspace{.1cm}\\  {Q_2^{\T}}  
	\end{pmatrix}
	\,X\,\beta + Q^{\T}{\mathcal{E}}
	=
	\begin{pmatrix}
		{\theta}\\ {0}
	\end{pmatrix}
	+Q^{\T}{\mathcal{E}}\,,
\end{align}
where $	Q_1^{\T}\,X\,\beta =\theta\,$ and where $Z$ and $U$ are, respectively,  $q\times p$ and  $m\times p$ matrices. As $X=Q_1\,L^{\top}$, the projection matrix $P_X$ satisfies $P_X= Q_1\,L^{\top}(L^{\top}\,L)^{-1}L\,Q_1^{\top}=Q_1\,Q_1^{\top}$ so that  $I_n-P_X= Q_2\,Q_2^{\top}$. It follows that \eqref{S.1} becomes  $$S   =Y^{\T}Q_2\,Q_2^{\T}Y=U^{\T}\,U,$$  according to \eqref{canonique.mode}, which is a sufficient statistic for $\Sigma$.

The orthogonal matrix $Q$ provides a linear reduction from $n$ to $q$ observations within each of the $p$ responses. In addition, according to \eqref{noise.density}, the density of $Q^{\T}\mathcal{E}$ is the same as that of $\mathcal{E}$, and hence, $(Z^{\T} U^{\T})^{\T}$ has an elliptically symmetric distribution about the matrix $(\theta^{\T} 0^{\T})^{\T}$ with density 
\begin{align}\label{density}
	({z},{u}) 
	&\mapsto
	|\Sigma|^{-n/2} \,f \big(\,\tr\, ({z} 
	- {\theta})\,\Sigma^{-1}\,({z}-
	{\theta})^{\T } + \tr\, {u}\,\Sigma^{-1}\,{u}^{\!\T }
	\,\big)\,,
\end{align}
where $\theta$ and $\Sigma$ are unknown. In this sense, the model \eqref{canonique.mode} is the canonical form of the multivariate linear regression model \eqref{linear.model}.
Note that the marginal distribution of $U= Q_2^{\T}\,Y$ is elliptically symmetric about $0$ with covariance matrix proportional to $I_m \otimes \Sigma$ (see \cite{FangZhang1990}).
This implies that $S=U^{\top}\,U$  have a generalized Wishart distribution (see \cite{Diaz2011}), which coincides with the standard (singular or non--singular) Wishart distribution in the Gaussian setting (see \cite{Srivastava2003}).

As mentioned in Section \ref{Introduction}, the usual estimators of $\hat{ \Sigma}_a$ in \eqref{natural.estimators} perform poorly.
We propose alternative estimators of the form 
\begin{align}\label{alternative.estimators.0}
	\hat{\Sigma}_{J}=a\,(S + J)\,,
\end{align}
where $J=J(Z,S)$ is a correction matrix.
The improvement over the class of estimators $\hat{ \Sigma}_a$ can be done by improving  the best estimator $\hat{\Sigma}_{a_{o}}=a_o\,S$ within this class, namely, the estimator which minimizes the risk \eqref{risk}. It is proved in the Appendix  that
\begin{align}\label{optimal.estimator}
	\hat{\Sigma}_{a_{o}}= a_{o}\,S\,,
	\quad
	\text{with}
	\quad
	a_{o}=\frac{1}{K^{*}\,v}\quad \text{and}\quad v=\max\{p,m\}\,,
\end{align}
%
where $K^{*}$ is the normalizing constant (assumed to be finite) of the density defined by 
\begin{align}\label{density.F*}
	({z},{u}) \mapsto \frac{1}{K^{*}}|\Sigma|^{-n/2}\,F^{*}
	\big(\,\tr\,({z} - 	
	{\theta})\,\Sigma^{-1}\,({z}-{\theta})^{\top} + \tr\,u\,\Sigma^{-1}\,u^{\T}\,\big)\,,
\end{align} 
where, for any  $t\geq 0$, $${F^{*}(t) =\frac{1}{2} \, \int^{\infty}_{t} f(\nu) \,d\nu}\,.$$
Note that under de quadratic loss function \eqref{quadratic.loss} the optimal constant is $1/K^{*}(p+m+1)$.
Of course, this risk optimality has sense only  if  the risk of $\hat{\Sigma}_{a_o}$ is finite. As shown in  \cite{haddouche:tel-02376077}, this is the case as soon as $\Exp{}\big[\tr\big(\Sigma^{-1}S\big)\big]<\infty$ and $\Exp{}\left[\tr\big(\Sigma\, S^{+}\big)\right]<\infty$.

In order to give a unified dominance result of $\hat{\Sigma}_{J}$ over 
$\hat{\Sigma}_{a_{o}}$ for the two cases where $S$ is  non--invertible  and where $S$ is invertible, we consider, as a correction matrix in \eqref{alternative.estimators.0},  the projection of a matrix function $G(Z,S)=G$ on the subspace spanned by the columns of $SS^{+}$, namely, 
\begin{align}\label{alternative.estimators.1}
	J=SS^{+}G\,.
\end{align}
In addition to the risk finiteness  conditions of $\hat{ \Sigma}_{a_o}$, it can be shown that the risk of $\hat{\Sigma}_{J}$ is finite as soon as the expectations $\Exp{}\big[\|\Sigma^{-1}SS^{+}G\|_F^2\big]$ and $\Exp{}\big[\|S^{+}G\|_F^2\big] $ are finite, where $\|\cdot \|_F$ denotes the Frobenius norm. Under these conditions, the risk difference between  $\hat{\Sigma}_{J}$ and $\hat{ \Sigma}_{a_o}$ is
\begin{align}\label{Delta.SG.1}
	\Delta(G)
	&=
	a^{2}_{o}\Exp{}\big[ \,\tr\big(\Sigma^{-1}\,SS^{+}\,G\{I_p+ S^{+}G + SS^{+}	\}\big)\big]
	-2\,a_{o}\,\Exp{}\big[\tr\big(S^{+}\,G\big)\big] \,.
\end{align}
Noticing that the first integrand term in \eqref{Delta.SG.1} depends on the unknown parameter $\Sigma^{-1}$, our approach consists in replacing this integrand term by a random matrix $\delta(G)$, which does not depend on $\Sigma^{-1}$, such that $\Delta(G)\leq \Exp{*}\big[\delta(G)\big]$ where $\Exp{*}$ denotes the expectation with respect to the density \eqref{density.F*}. Clearly, a sufficient condition for $\Delta(G)$ to be non--positive (and hence, for  $\hat{\Sigma}_{J}$ to improve over $\hat{\Sigma}_{a_{o}}$) is that $\delta(G)$ is non--positive. To this end, we rely on  the following Stein--Haff type identity.
\begin{lemma}[\cite{HADDOUCHE2021104680}]\label{lemma.id.hs}
	Let $G(z,s)$ be a $p \times p$ matrix function such that, for any fixed $z$, $G(z,s)$ is weakly differentiable with respect to $s$. Assume that $\Exp{} \big[| \tr (\Sigma^{-1} S \,S^{+}\,G) | \big] < \infty$. Then we have 
	\begin{align}\label{Stein-Haff.id}
		\Exp{}\big[\tr\big(\,\Sigma^{-1}\,SS^{+}\,G\big)\big] 
		=
		K^{*}\,\Exp{*}\big[\tr \big(
		2\,SS^{+}\,{\cal D}_s
		\{SS^{+}G\}^{\T}\, + (m-r-1)\,S^{+}\,G\big)\,\big]\,,
	\end{align}
	where $r=\min\{p,m\}$  and ${\cal D}_s \{\cdot\}$  is the Haff operator whose generic element is
	%
	$ \frac{1}{2}\,(1 + \delta_{ij})\,\frac{\partial}{\partial S_{ij}},  
	$
	with $\delta_{ij}=1$ if $i=j$ and $\delta_{ij}=0$ if $i\neq j$.
\end{lemma}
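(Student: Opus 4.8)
The plan is to establish \eqref{Stein-Haff.id} by an integration by parts in the entries of $U$, performed against the density \eqref{density}, followed by a passage from derivatives in $U$ to Haff derivatives in $S=U^{\T}U$. Write $W(z,u)=\tr\,(z-\theta)\,\Sigma^{-1}(z-\theta)^{\T}+\tr\,u\,\Sigma^{-1}u^{\T}$ and set $M(z,u)=u\,S^{+}G$, an $m\times p$ matrix. First note the purely algebraic facts $SS^{+}G=u^{\T}M$ and, by symmetry of $\Sigma^{-1}$, $\tr\,(\Sigma^{-1}SS^{+}G)=\tr\,(\Sigma^{-1}u^{\T}M)=\sum_{i,j}(u\Sigma^{-1})_{ij}\,M_{ij}$. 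Since $\partial W/\partial u_{ij}=2\,(u\Sigma^{-1})_{ij}$ and $F^{*\prime}=-f/2$, one obtains the pointwise relation $f(W)\,(u\Sigma^{-1})_{ij}=-\,\partial_{u_{ij}}F^{*}(W)$, which simultaneously pulls $\Sigma^{-1}$ out of the integrand and turns $f$ into $F^{*}$.

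Next I would integrate by parts in each variable $u_{ij}$. Under the assumed weak differentiability of $G$ in $s$ (hence of $M$ in $u$) and the hypothesis $\Exp{}[|\tr(\Sigma^{-1}SS^{+}G)|]<\infty$, the boundary terms vanish (here one uses that $F^{*}$ is non--increasing and tends to $0$) and Fubini applies, so
\begin{align*}
\Exp{}\big[\tr\,(\Sigma^{-1}SS^{+}G)\big]
&=-\int\!\sum_{i,j} M_{ij}\,|\Sigma|^{-n/2}\,\partial_{u_{ij}}F^{*}(W)\,\dif u\,\dif z\\
&=\int\!\Big(\sum_{i,j}\partial_{u_{ij}}M_{ij}\Big)|\Sigma|^{-n/2}\,F^{*}(W)\,\dif u\,\dif z .
\end{align*}
Recognising $|\Sigma|^{-n/2}F^{*}(W)/K^{*}$ as the density \eqref{density.F*}, the right-hand side equals $K^{*}\,\Exp{*}\big[\sum_{i,j}\partial_{u_{ij}}(u\,S^{+}G)_{ij}\big]$. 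Care is needed here because $G$ is only weakly differentiable: one approximates $G$ by smooth functions, passes to the limit using the finiteness assumption, and checks that no boundary contribution survives, in particular across the locus where ${\rm rank}\,S$ drops.

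It then remains to evaluate the divergence $\sum_{i,j}\partial_{u_{ij}}(u\,S^{+}G)_{ij}$ by the chain rule through $S=u^{\T}u$. Using $\partial S_{kl}/\partial u_{ij}=\delta_{jk}u_{il}+\delta_{jl}u_{ik}$ and the product rule, differentiation of the explicit factor $u$ produces $m\,\tr(S^{+}G)$, while the dependence through $S$, once the partials are symmetrised as in the definition of ${\cal D}_s$, assembles into $2\,\tr\big(SS^{+}{\cal D}_s\{SS^{+}G\}^{\T}\big)$ plus further scalar multiples of $\tr(S^{+}G)$; collecting all such multiples yields the constant $m-r-1$ with $r=\min\{p,m\}$. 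This is the elliptical and possibly singular counterpart of the Gaussian Stein--Haff manipulations of \cite{Haff1980}, \cite{Konno2009} and \cite{TsukumaKubokawa2015}; the projections $SS^{+}$ and the precise value $m-r-1$ stem from tracking the Moore--Penrose inverse and the rank deficiency of $S$ when $m<p$.

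The main obstacle is precisely this last step: one must verify that differentiating $S^{+}$ only contributes directions lying in the column space of $S$, treat the two regimes $m\ge p$ and $m<p$ in a single unified computation, and isolate the constant $m-r-1$ without sign or counting errors. A secondary, more technical, difficulty is the rigorous justification of the integration by parts under mere weak differentiability, which relies on a smoothing--truncation argument together with the integrability hypothesis of the lemma.
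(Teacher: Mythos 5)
The paper itself does not prove this lemma: it is imported verbatim from \cite{HADDOUCHE2021104680}, so there is no in-paper argument to compare yours against. Your sketch does follow the route used in that reference (and in the elliptical Stein--Haff literature going back to Kubokawa and Srivastava): write $\tr(\Sigma^{-1}SS^{+}G)=\sum_{i,j}(u\Sigma^{-1})_{ij}(uS^{+}G)_{ij}$, absorb $\Sigma^{-1}$ together with the generating function $f$ into $-\partial_{u_{ij}}F^{*}(W)$, integrate by parts in the entries of $u$, and identify the resulting divergence with the right--hand side of \eqref{Stein-Haff.id}. The preliminary identities you state ($SS^{+}G=u^{\T}uS^{+}G$, $\partial W/\partial u_{ij}=2(u\Sigma^{-1})_{ij}$, $F^{*\prime}=-f/2$, the bookkeeping that produces $K^{*}\Exp{*}$) are all correct.

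The gap is that the decisive step is asserted rather than carried out. The entire content of the lemma is the divergence identity $\sum_{i,j}\partial_{u_{ij}}(uS^{+}G)_{ij}=\tr\big(2\,SS^{+}{\cal D}_s\{SS^{+}G\}^{\T}+(m-r-1)\,S^{+}G\big)$, which requires differentiating through $S=u^{\T}u$, controlling the derivative of the Moore--Penrose inverse $S^{+}$ (whose differential has components outside the column space of $S$ precisely in the singular regime $m<p$), and extracting the constant $m-r-1$ uniformly across the cases $m\ge p$ and $m<p$. You correctly compute the contribution $m\,\tr(S^{+}G)$ from the explicit factor $u$, but everything after that --- the part where the projector $SS^{+}$, the transpose inside ${\cal D}_s\{\cdot\}^{\T}$, and the correction $-(r+1)\tr(S^{+}G)$ actually emerge --- is announced as an outcome rather than derived; you yourself flag it as ``the main obstacle'' and leave it undone. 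The same applies, less seriously, to the vanishing of the boundary terms under mere weak differentiability, which is deferred to an unspecified smoothing--truncation argument. As written, the proposal is a correct plan with the right intermediate targets, but not yet a proof.
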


Note that the existence of the expectations in \eqref{Stein-Haff.id} is implied by the above risk finiteness conditions.
An original Stein--Haff identity was derived independently by  \cite{Stein1977} and  \cite{haff1979} in the Gaussian setting where $S$ is invertible. This identity was extended to the class of elliptically symmetric distributions in \eqref{density}  \cite{kubokawa1999} and also by   \cite{GuptaBodnar2009}. Here, we use the new Stein--Haff type identity recently derived  by  \cite{HADDOUCHE2021104680} in the elliptical framework \eqref{density}  dealing with both cases $S$ non--invertible and $S$ invertible.

Applying Lemma \ref{lemma.id.hs} to the term depending on $\Sigma^{-1}$ in the right--hand side of \eqref{Delta.SG.1} gives 
\begin{align}\label{Delta.SG.2.S}
	\Delta(G)
	& =
	a_o^2\,K^{*}\,\Exp{*}\big[(m-r -1 )\, \tr\big(S^{+}G + (S^{+}G)^2  + S^{+}GSS^{+} \big) 
	%
		 \nonumber	\\
		&\hspace{2cm}
	%
	+2\,\tr \big(SS^{+}\,{\cal D}_s\{SS^{+}G + SS^{+}GS^{+}G +SS^{+}\,G\,SS^{+} \}^{\T}	\big)\big]  
	-2\,a_o\,\Exp{}\big[\tr\big(S^{+}\,G\big)\big].
\end{align}

It is worth noticing that the risk difference in \eqref{Delta.SG.2.S} depends on  the $\Exp{}$ and $\Exp{*}$ expectations (which coincide in the Gaussian setting since $F^{*}=f$). Thus, in order to derive a dominance result, we need to compare these two expectations. A possible approach consists to restrict us to the  subclass of densities verifying $c \leq{F^{*}(t)}/{f(t)}\leq b$, for some positive constants $c$ and $b$ (see  \cite{Berger1975} for the class where $c \leq{F^{*}(t)}/{f(t)}$). Due to the complexity of the use of the quadratic loss in \eqref{quadratic.loss} (which necessitates a twice application of the Stein--Haff type identity \eqref{Stein-Haff.id}), this subclass was considered by  \cite{HADDOUCHE2021104680}.
Here, thanks to the data--based loss \eqref{loss}, we are able to avoid such a  restriction, and hence, to deal with a larger class of elliptically symmetric distributions in \eqref{density}  (subject to the moment conditions induced by the above finiteness conditions). 

Following the suggestion to shrink the eigenvalues of $S$  mentioned in Section \ref{Introduction}, we consider as a correction matrix a matrix  $SS^{+}G$ with $G$ orthogonally invariant in the following sense.
Let $S=H\,L\,H^{\top}$ the eigenvalue decomposition of $S$
where $H$ is a $p\times r $ semi--orthogonal matrix of eigenvectors and $L={\rm diag}(l_1,\dots,l_r)$, with $l_1 >,\dots,>l_r$, is the diagonal matrix of the $r$ positive corresponding eigenvalues of $S$ (see  \cite{KubokawaSrivastava2008} for more details).
Then set $G=H\,L\Psi(L)\,H^{\top}$, with $\Psi(L)={\rm diag}(\psi_1(L),\dots,\psi_r(L))$ where $\psi_i=\psi_i(L)$ ($i=1,\dots,r$) is a differentiable function of $L$. Consequently, by semi--orthogonality of $H$, we have $SS^{+}H=H\,H^{\top}H=H$, so that the correction matrix in \eqref{alternative.estimators.1} is
$$J=SS^{+}G=G=H\,L\Psi(L)\,H^{\top}.$$
Thus the alternative estimators that we consider are of the form 
\begin{align}\label{alternative.estimators}
	\hat{\Sigma}_{\Psi}
	&= a_{o}\,\big(S + H\,L\,\Psi(L)\,H^{\T}) = a_{o}\,H\,L\,\big(I_{r}+\Psi(L)\big)\,H^{\T}\,,
\end{align}
which are usually called orthogonally invariant estimators
(i.e. equivariant under orthogonal transformations). See for instance  \cite{Takemura1984}.


Now, adapting the risk finiteness conditions mentioned above, we are in a position to give our dominance result of the alternative estimators in \eqref{alternative.estimators} over the optimal estimator  in  \eqref{optimal.estimator}, under the data--based loss \eqref{loss}.
\begin{theorem}\label{risk.diff.}
	Assume that the following expectations $\Exp{}\big[\tr(\Sigma^{-1}S)\big]$, $\Exp{}\big[\tr(\Sigma^{}S^{+})\big]$, $\Exp{}\big[\|\Sigma^{-1}HL\Psi(L) H^{\T} \|_F^{2} \big]$ and  $\Exp{}\big[\|H\Psi(L) H^{\T} \|_F^{2} \big]$ are finite.
	Let $\Psi(L)={\rm diag}(\psi_1,\dots,\psi_r)$ where $\psi_i=\psi_i(L)$ ($i=1,\dots,r$) is differentiable function of $L$ with $\tr\big(\Psi(L)\big)\geq \lambda$, for a fixed positive constant $\lambda.$
	
	Then an upper bound of the risk difference between $\hat{\Sigma}_\Psi$  and $\hat{\Sigma}_{a_o}$ under the loss function \eqref{loss} is given by 
	\begin{align*}
		\Delta(\Psi(L)) \leq	
		a_{o}^{2}\,K^{*}\,\Exp{*}\big[ g (\Psi)\big]\,,
	\end{align*}
	where 
	\begin{align}\label{tool.t12.3}
		g(\Psi)=
		\sum_{i=1}^{r} \left\{ 2(v-r+1)\psi_i
		+ (v-r+1)\psi_i^2 
		+4l_i(1+\psi_i )\frac{\partial \psi_i}{\partial l_i}
		+\sum_{j\neq i}^{r}\frac{l_i\,(2\psi_i + \psi_i^2)-l_j(2\psi_j + \psi_i^2)}{l_i -l_j}
		-2v\lambda \right\}.
	\end{align} 
	Also $\hat{\Sigma}_{\Psi}$ in \eqref{alternative.estimators} improves over $\hat{\Sigma}_{a_o}$  in  \eqref{optimal.estimator} as soon as $g(\Psi)\leq0$.
\end{theorem}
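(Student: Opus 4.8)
The plan is to start from the exact expression \eqref{Delta.SG.2.S} of the risk difference with the correction matrix $G=HL\Psi(L)H^{\T}$ and to reduce every term to a scalar function of the eigenvalues $l_1>\dots>l_r$ of $S$. Since $H$ is $p\times r$ semi--orthogonal, $H^{\T}H=I_r$, $S^{+}=HL^{-1}H^{\T}$ and $SS^{+}=HH^{\T}$, so that $SS^{+}G=HL\Psi(L)H^{\T}$, $S^{+}G=H\Psi(L)H^{\T}$, $(S^{+}G)^{2}=H\Psi(L)^{2}H^{\T}$, $S^{+}GSS^{+}=H\Psi(L)H^{\T}$ and $SS^{+}GS^{+}G=HL\Psi(L)^{2}H^{\T}$. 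Hence $SS^{+}G+SS^{+}GS^{+}G+SS^{+}GSS^{+}=HL\big(2\Psi(L)+\Psi(L)^{2}\big)H^{\T}$ and $\tr\big(S^{+}G+(S^{+}G)^{2}+S^{+}GSS^{+}\big)=\sum_i(2\psi_i+\psi_i^{2})$, which already isolates the $(m-r-1)\sum_i(2\psi_i+\psi_i^{2})$ part of \eqref{Delta.SG.2.S}.

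The core of the argument is the evaluation of the Haff--operator term $2\,\tr\big(SS^{+}\,{\cal D}_s\{HL(2\Psi+\Psi^{2})H^{\T}\}^{\T}\big)$. Here I would invoke the differentiation calculus for orthogonally invariant matrix functions (in the spirit of \cite{Stein1977}, \cite{haff1979} and \cite{KubokawaSrivastava2008}): for functions $g_i=g_i(L)$ it expresses $\tr\big(SS^{+}{\cal D}_s\{H\,{\rm diag}(g_1,\dots,g_r)\,H^{\T}\}^{\T}\big)$ through the partial derivatives $\partial g_i/\partial l_i$, the divided differences $(g_i-g_j)/(l_i-l_j)$, and a term issued from differentiating the projection $HH^{\T}$ that records the $v-r=|p-m|$ ``missing'' directions when $S$ is singular. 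Taking $g_i=l_i(2\psi_i+\psi_i^{2})$ gives $\partial g_i/\partial l_i=(2\psi_i+\psi_i^{2})+2l_i(1+\psi_i)\,\partial\psi_i/\partial l_i$; collecting the resulting contributions with the $(m-r-1)$ term from the first step, the coefficient of $\sum_i(2\psi_i+\psi_i^{2})$ turns into $v-r+1$ (uniformly for $p\le m$ and for $p>m$), and one is left with the terms $4l_i(1+\psi_i)\,\partial\psi_i/\partial l_i$ and $\sum_{j\ne i}\big(l_i(2\psi_i+\psi_i^{2})-l_j(2\psi_j+\psi_j^{2})\big)/(l_i-l_j)$ of \eqref{tool.t12.3}. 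This accounting, and in particular the unified handling of the non--singular and singular cases through the rank of $HH^{\T}$, is the step I expect to be the main obstacle.

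Finally, the term $-2\,a_o\,\Exp{}\big[\tr(S^{+}G)\big]=-2\,a_o\,\Exp{}\big[\tr\Psi(L)\big]$ is an expectation with respect to the original density \eqref{density}, not to \eqref{density.F*}; this is precisely where the data--based loss simplifies matters. Because $\tr\Psi(L)\ge\lambda$ holds deterministically, $\Exp{}\big[\tr\Psi(L)\big]\ge\lambda$, whence $-2\,a_o\,\Exp{}\big[\tr\Psi(L)\big]\le-2\,a_o\,\lambda$; since $a_o=1/(K^{*}v)$ this equals $a_o^{2}K^{*}(-2v\lambda)$, and as $-2v\lambda$ is a constant its $\Exp{}$ and $\Exp{*}$ expectations coincide, so this contributes the $-2v\lambda$ term to $g(\Psi)$. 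The hypotheses that $\Exp{}[\tr(\Sigma^{-1}S)]$, $\Exp{}[\tr(\Sigma S^{+})]$, $\Exp{}\big[\|\Sigma^{-1}HL\Psi(L)H^{\T}\|_F^{2}\big]$ and $\Exp{}\big[\|H\Psi(L)H^{\T}\|_F^{2}\big]$ are finite are exactly what justifies \eqref{Delta.SG.1}--\eqref{Delta.SG.2.S} and the application of Lemma \ref{lemma.id.hs} to $G=HL\Psi(L)H^{\T}$. Adding the three contributions yields $\Delta(\Psi(L))\le a_o^{2}K^{*}\,\Exp{*}\big[g(\Psi)\big]$ with $g(\Psi)$ as in \eqref{tool.t12.3}; and since $a_o^{2}K^{*}>0$, the pointwise inequality $g(\Psi)\le0$ forces $\Delta(\Psi(L))\le 0$, i.e. $\hat{\Sigma}_{\Psi}$ dominates $\hat{\Sigma}_{a_o}$ under \eqref{loss}.
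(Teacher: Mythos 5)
Your proposal is correct and follows essentially the same route as the paper: specialize the correction to $G=HL\Psi(L)H^{\T}$, reduce all products via $H^{\T}H=I_r$ so the risk difference involves only $\tr\big(\Sigma^{-1}HL(2\Psi+\Psi^2)H^{\T}\big)$ and $\tr(\Psi)$, apply the Stein--Haff identity combined with the eigenvalue--differentiation calculus for ${\cal D}_s\{HL\Phi(L)H^{\T}\}$ (which the paper packages as Corollary A.1 via a cited lemma), and bound the remaining $\Exp{}$-term by $-2a_o\lambda=a_o^2K^{*}(-2v\lambda)$. One remark: your accounting correctly produces the $-2v\lambda$ contribution once (added to the whole expression), which is what the paper's own subsequent use of $g(\Psi)$ for the Haff-type estimators requires; as literally printed, \eqref{tool.t12.3} places $-2v\lambda$ inside the sum over $i$ (and writes $\psi_i^2$ where $\psi_j^2$ is meant in the divided-difference numerator), both of which are typographical slips rather than discrepancies in your argument.
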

The proof of Theorem \ref{risk.diff.} is given in the Appendix.
Note that, although the expectation $\Exp{*}$ is associated to the generating function $f(\cdot)$ in \eqref{noise.density}, the function $g(\Psi)$ does not depend on $f(\cdot)$, and hence, the improvement result in Theorem \ref{risk.diff.} is robust in that sense.
Note also that Theorem \ref{risk.diff.} is well adapted to deal with the  \cite{James1961a} estimator where $\psi_i(L)=1/(v+r-2i+1)$, for $i=1,\dots,r$, since $\tr\big(\Psi(L)\big)>\lambda=1/(v+r-1)$ and the  Efron-Morris-Dey estimator, considered by  \cite{tsukuma2020estimation}, where $\psi_i(L)=1/\big(1+b\,l_i^\alpha/\tr(L^{\alpha})\big)v$, for $i=1,\dots,r$ and for positive constants $b$ and $\alpha$, since $\tr\big(\Psi(L)\big)>\lambda=r\,/(b+1)\,v$. 

In the following, we  consider a new  class of estimators which is an extension of the  \cite{Haff1980} class,
that is,  estimators of the form
\begin{align}\label{haff.estimators}
	\hat{\Sigma}_{\alpha,b} = a_{o}\,\big(S + H\,L\,\Psi(L)\,H^{\T}\big) 
	%
	\,\, \text{with, for}\,\, \alpha \geq 1 \,\, \text{and}  \,\, b >0, 
	%
	\,\,\Psi(L) = b\,\frac{L^{-\alpha}}{\tr (L^{-\alpha})}\,,
\end{align}
where $a_{o}$ is given in \eqref{optimal.estimator}. For $\alpha =1$, this is the estimator considered by \cite{Konno2009}, who deals with the Gaussian case and the quadratic loss \eqref{quadratic.loss}, while  \cite{tsukuma2020estimation} used an extended Stein loss. An elliptical setting was also considered by \cite{HADDOUCHE2021104680} under  the quadratic loss \eqref{quadratic.loss}.

It is proved in the Appendix that, for the entire class of elliptically symmetric distributions in \eqref{density},   any estimator  $\hat{\Sigma}_{\alpha,b}$ in \eqref{haff.estimators} improves on the optimal estimator $\hat{\Sigma}_{a_o}$ in \eqref{optimal.estimator}, under the data--based loss \eqref{loss}, as soon as
\begin{align}\label{improvement.c}
	0 < b \leq \frac{2\,(r -1)}{v - r +1 }\,.
\end{align}
It worth noting that  \cite{tsukuma2020estimation}  gave  Condition  \eqref{improvement.c} as an improvement condition although their loss was different.
\section{Numerical study}\label{numerical.study}

Let the elliptical density in \eqref{noise.density} be a variance mixture of normal distributions where the mixing variable, with density $h$, has the inverse--gamma distribution  ${\cal IG}(k/2,k/2)$  with shape and scale parameters both equal to $k/2$ for $k>2$.
Thus, for any $t \geq 0$, the generating function $f$ in \eqref{noise.density} has the form 
\begin{align*}
	f(t)&= \int_{0}^{\infty} \frac{1}{(2{\tt v}\pi)^{np/2}}\exp\left( \frac{-t}{2{\tt v}}\right)\,h({\tt v})\,d {\tt v}\,,
\end{align*} 
which corresponds to the $t$--distribution with $k$ degrees of freedom. Then the primitive $F^{*}$ of $f$ in \eqref{density.F*} is, for any $t\geq 0$,
\begin{align*}
	F^{*}(t)
	&
	=\frac{1}{2}\int_{t}^{\infty}\int_{0}^{\infty} \frac{1}{(2{\tt v}\pi)^{np/2}}\exp\left( \frac{-w}{2{\tt v}}\right)\,h({\tt v})\,d{\tt v}\,d{w}  
	%
	=\int_{0}^{\infty} \frac{{\tt v}}{(2{\tt v}\pi)^{np/2}}\exp\left(\frac{-t}{2{\tt v}}\right)\,h({\tt v})\,d{\tt v}\,.
\end{align*}
by Fubini's theorem. Therefore the normalizing constant $K^{*}$ in  \eqref{density.F*}  is 
\begin{align}\label{K*.student}
	K^{*}
	&=
	\int_{\R^{pn}}\int_{0}^{\infty} \frac{|\Sigma|^{-n/2}}{(2{\tt v}\pi)^{np/2}}\,{\tt v}\,\exp\left(\frac{-1}{2{\tt v}}\,\big(\,\tr\, ({z} 
	- {\theta})\,\Sigma^{-1}\,({z}-
	{\theta})^{\T } + \tr\, \Sigma^{-1}\,{u}^{\!\T }
	{u}\,\big)\right)h({\tt v})\,d{\tt v}\,\,dz\,du\,, 
	\nonumber
	\\
	&=\int_{0}^{\infty}{\tt v}\int_{\R^{pn}} \frac{|\Sigma|^{-n/2}}{(2\,{\tt v}\pi)^{np/2}}\,\exp\left(\frac{-1}{2{\tt v}}\,\big(\,\tr\, ({z} 
	- {\theta})\,\Sigma^{-1}\,({z}-
	{\theta})^{\T } + \tr\, \Sigma^{-1}\,{u}^{\!\T }
	{u}\,\big)\right)\,dz\,du\,h({\tt v})\,d{\tt v}\, 
\end{align}
by Fubini's theorem. Clearly the most inner integral in \eqref{K*.student} equals 1 so that

\begin{align*}
K^{*}&=\int_{0}^{\infty}{\tt v}\,h({\tt v})\,d{\tt v}
=\frac{k}{k-2},
\end{align*}
by propriety of ${\cal IG}(k/2,k/2)$. Note that, when $k$ goes to $\infty $,  ${\cal IG}(k/2,k/2)$ goes to the multivariate Gaussian distribution  (for which $K^{*}=1$ since $f=F^*$) with covariance matrix  $I_n \otimes \Sigma$.

In the following, we study numerically the performance of the  alternative estimators in \eqref{haff.estimators} expressed as 
\begin{align}\label{haff.estimators.bis}
	\hat{\Sigma}_{\alpha,b} = a_{o}\left(S + \frac{b}{\tr (L^{-\alpha})}H\,L^{1-\alpha}\,H^{\T}\right) \quad \text{where} \quad 0\leq b\leq b_{0}=\frac{2\,(r -1)}{v - r +1 }\quad \text{and} \quad \alpha \geq 1.
\end{align}
%
%
As mentioned above,  \cite{Konno2009} consider the case  $\alpha=1$, in the Gaussian setting and under the quadratic loss (\ref{quadratic.loss}), for which its improvement condition is
$$0 \leq b \leq b_1=\frac{2\,(r -1)\,(v+r+1)}{(v - r +1)\,(v-r+3)}.$$
%
Note that, although $b_{0}< b_{1}$, the improvement condition in \eqref{haff.estimators.bis} is valid fo any $\alpha \geq 1$ and all the class of elliptically symmetric distributions \eqref{density}. However 
it was  shown numerically by \cite{HADDOUCHE2021104680} that $b_1$  is optimal in the Gaussian context.

We consider the following structures  of $\Sigma$: $\rm(i)$ the identity matrix $I_p$ and $\rm(ii)$ an autoregressive structure with coefficient $0.9$ (i.e. a $p\times p$ matrix where the $(i,j)$th element  is $0.9^{|i-j|}$). To assess how an alternative estimator $\hat{\Sigma}_{\alpha,b}$ improves over  $\hat{ \Sigma}_{a_o}$, we compute the Percentage Reduction In Average Loss (PRIAL) 
defined as 
\begin{align*}
	{\rm PRIAL}(\hat{\Sigma}_{\alpha,b})=\frac{ {\rm average\,\, loss\,\,of\,\,} \hat{ \Sigma}_{a_o} - {\rm average\,\, loss\,\,of\,\,} \hat{ \Sigma}_{\alpha,b}}{{\rm average\,\, loss\,\,of\,\,} \hat{ \Sigma}_{a_o}}\,
\end{align*}
and based on $1000$ independent Monte--Carlo replications for some couples $(p,m)$.

In Figure \ref{fig0}, we study the effect of the constant $b$ in \eqref{haff.estimators.bis} on the prial's in the non--invertible  ($(p,m)=(25,10)$) and the invertible  ($(p,m)=(10,25)$) cases. The  Gaussian setting is investigated for the structure $\rm{(i)}$ of $\Sigma$. Note that, when $0\leq b \leq b_{0} $, the best prial (around $7\%$ in both invertible and non--invertible cases) is reported for $b=b_{0}=1.125$ (for $(v,r)=(25,20)$). For this reason, in the following,  we consider the  estimators  $\hat{ \Sigma}_{\alpha,b_0}$ with  
$$b_{0}=\frac{2\,(r -1)}{v - r +1 }\,.$$
Note also that,  for $b> b_0$, the  estimators $\hat{ \Sigma}_{\alpha,b}$ still improve over $\Sigma_{a_{o}}$ and that the maximum value of the prial is around $50\%$.
This shows that there exists a larger range of values of $b$ than the one our theory provides for which $\hat{ \Sigma}_{\alpha,b}$ improves over $\hat\Sigma_{a_{o}}$.

In Figure \ref{fig1}, we study the effect of $\alpha$ on the prial's of the  estimator $\hat{\Sigma}_{\alpha,b_0}$ over $\hat{ \Sigma}_{a_o}=S/v$ when the sampling distribution is Gaussian ($K^{*}=1$ in \eqref{optimal.estimator}),
%
and over $\hat{ \Sigma}_{a_o}=S(k-2)/vk $ when it is the $t$-distribution ($K^{*}=(k-2)/k$ in \eqref{optimal.estimator}) with $k$ degrees of freedom. 
For the structure $\rm(i)$ of $\Sigma$, note that, for $\alpha\geq 6$, the prial's stabilize at $12.5\%$, in the Gaussian case, and at  $8.5\%$, in the Student case. Similarly, the prial's are better in the Gaussian setting for the structure $\rm(ii)$. In addition, it is interesting to observe that, when $\alpha$ is close to zero, the prial's are small for the structure $\rm(i)$ and may be negative for the structure $\rm(ii)$.

In Figure \ref{fig2}, under the Gaussian assumption,  we provide the prial's of $\hat{ \Sigma}_{\alpha,b_0}$  with respect to $\hat{ \Sigma}_{a_o}=S/v$ under the data--based loss \eqref{loss} and  the prial's of $\hat{ \Sigma}_{\alpha,b_1}$ with respect to $\hat{ \Sigma}_{a_o}=S/(v+r+1)$ under the quadratic loss \eqref{quadratic.loss}. For the two structures $\rm{(i)}$ and $\rm{(ii)}$ of $\Sigma$,  the prial's are better under the data--based loss. For the structure $\rm{(i)}$ with $\alpha=1$ (which coincide with the Konno's estimator), we observe a prial equal to $1.73\%$ which is similar to that of  \cite{Konno2009}. Note that, under the data--based loss the prial is much better since it equals  $13.42\%$. We observe similar behaviors for the structure $\rm{(ii)}$ than  for the structure $\rm{(i)}$, but with lower prial's.
\begin{figure}[p]
		\centering
		\includegraphics[width=.4\linewidth]{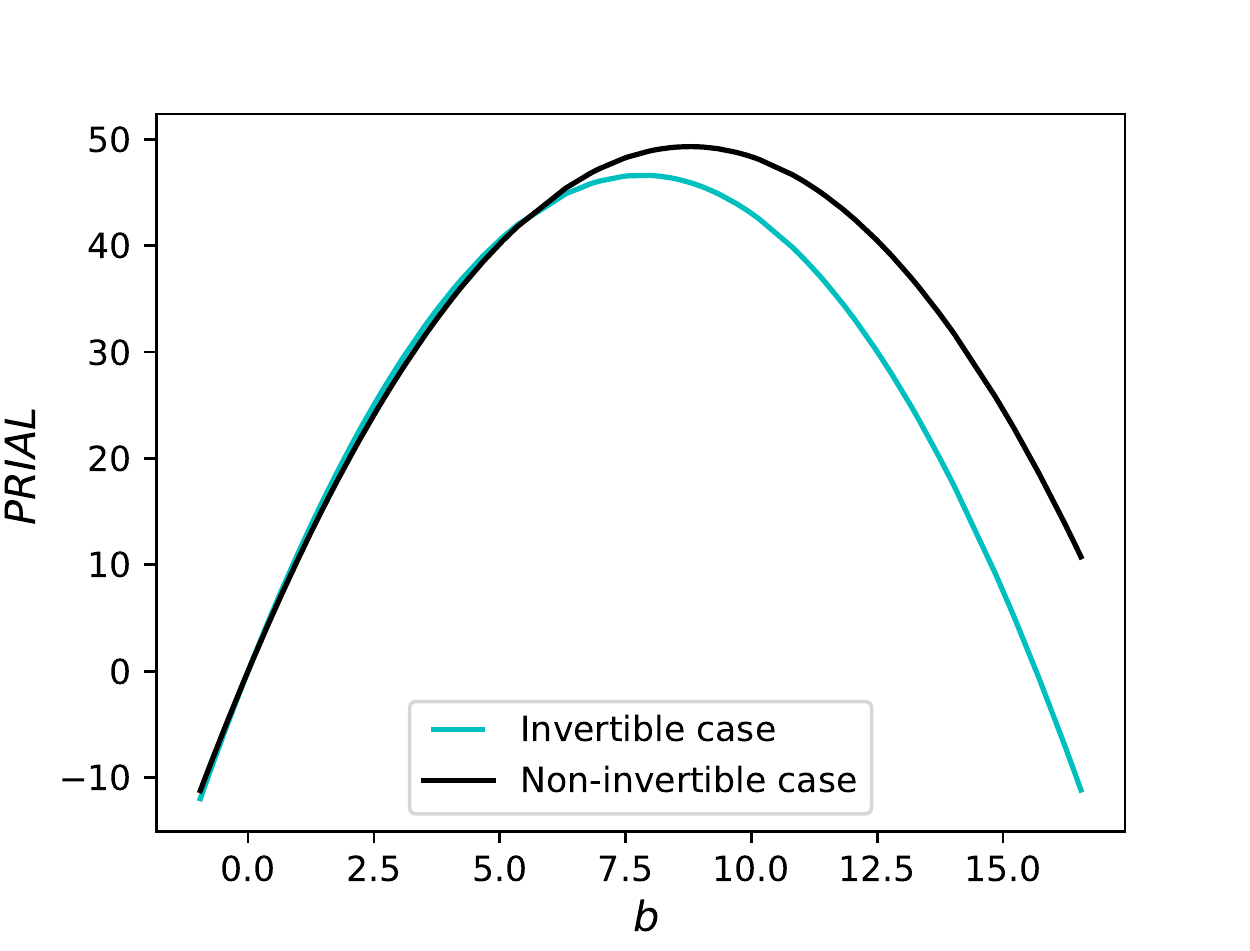}	
	\caption{Effect of $b$ on the PRIAL of $\hat{\Sigma}_{\alpha,b}$, with $\alpha=1$, under data--based loss in the Gaussian setting. The structure  ${\rm (i)}$ of $\Sigma$ is considered for the invertible case with $(p,m)=(10,25)$ and the non--invertible case with $(p,m)=(25,10)$.}
	\label{fig0}
\end{figure}
\begin{figure}[p]
	\begin{subfigure}{.5\textwidth}
		\centering
		\includegraphics[width=.75\linewidth]{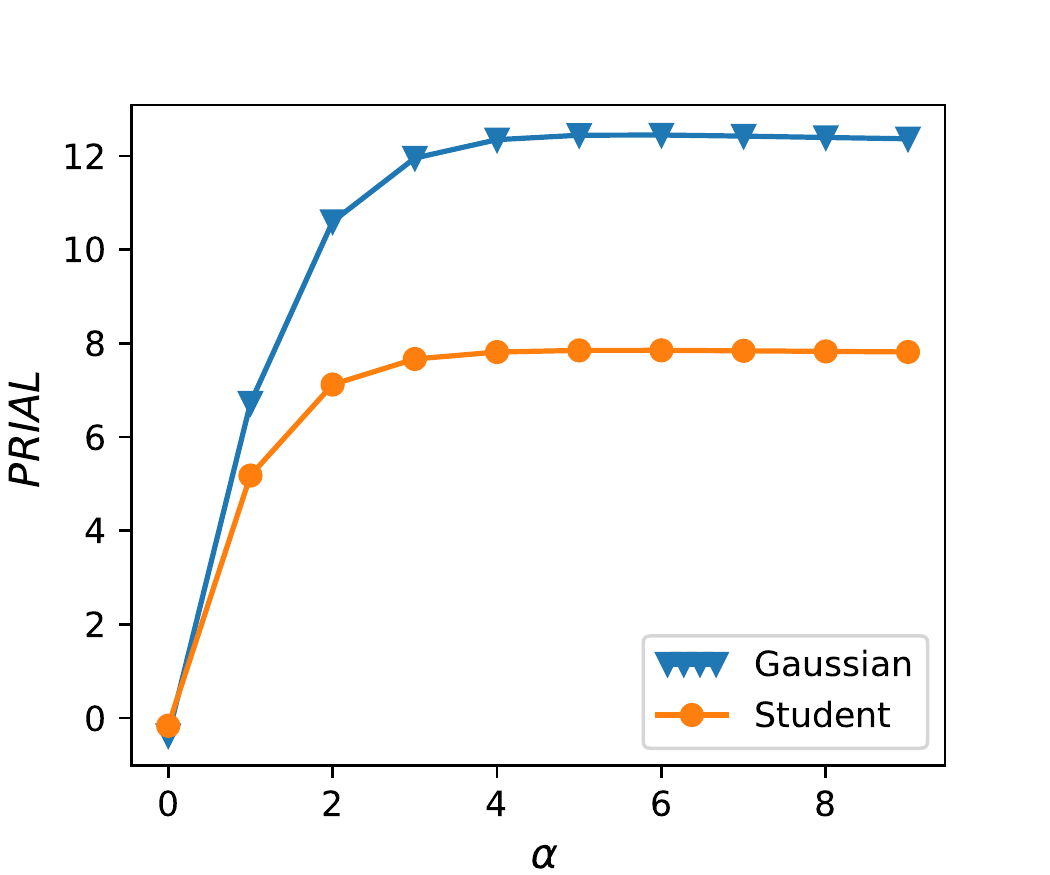}
		\caption{}
	\end{subfigure}%
	\begin{subfigure}{.5\textwidth}
		\centering
		\includegraphics[width=.75\linewidth]{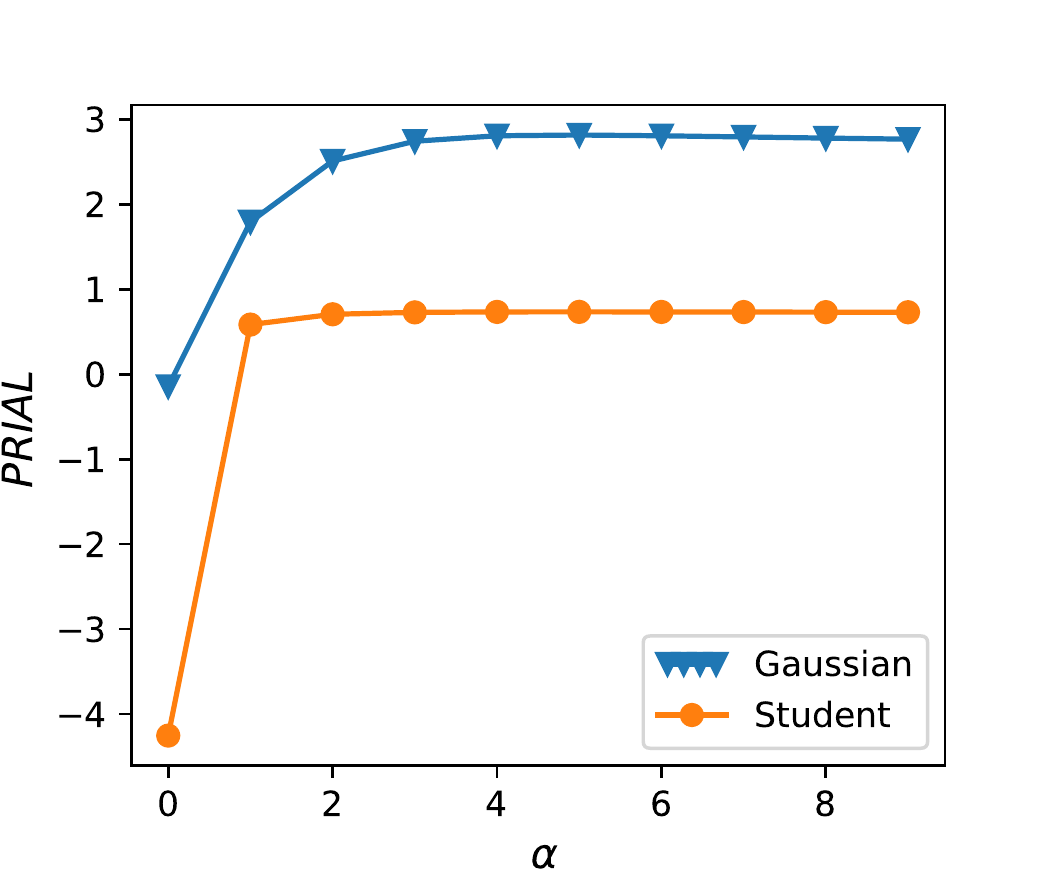}
		\caption{}
	\end{subfigure}
	
	\caption{PRIAL's of $\hat{\Sigma}_{\alpha,b_0}$ under the data--based loss. The non-invertible case  is considered, with $(p,m)=(50,20)$, for the structures ${\rm (i)}$ and ${\rm (ii)}$  of $\Sigma$ for the t-distribution, with $k=5$ degrees of freedom, and the Gaussian distribution. }
	\label{fig1}
\end{figure}
\begin{figure}[p]
	\begin{subfigure}{.50\textwidth}
		\centering
		\includegraphics[width=.75\linewidth]{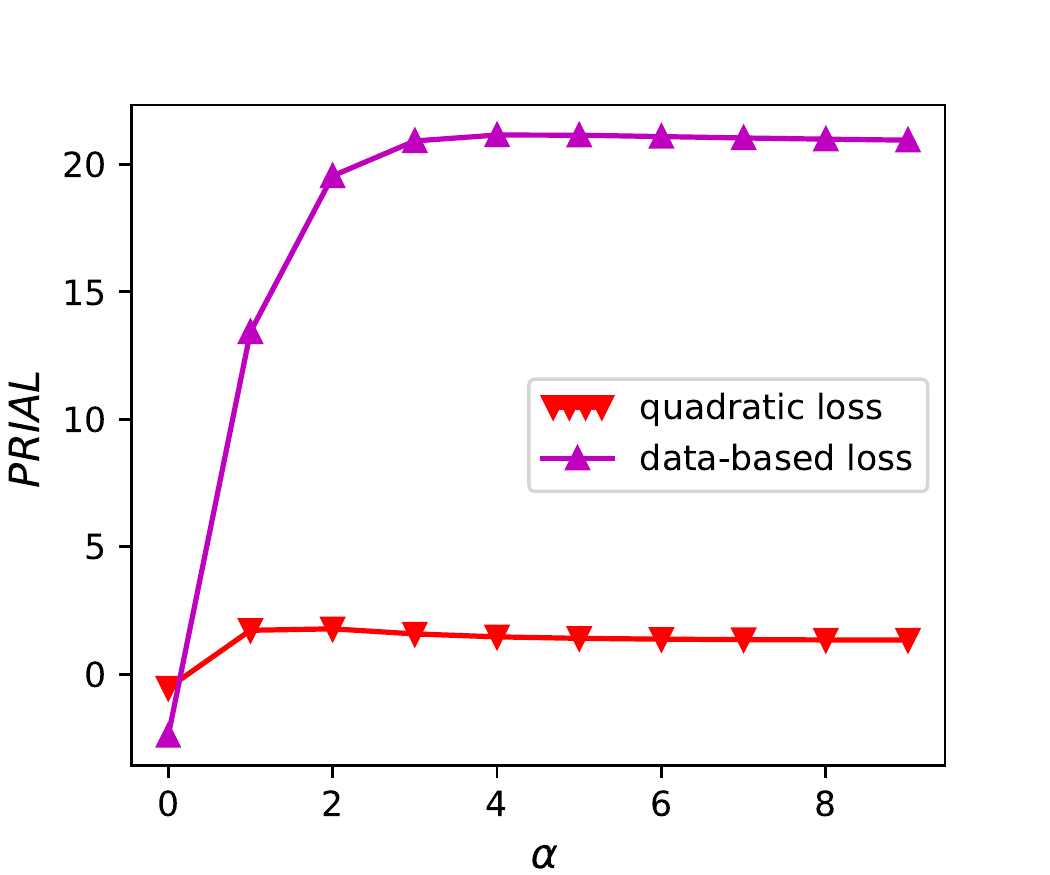}
		\caption{}
	\end{subfigure}%
	\begin{subfigure}{.50\textwidth}
		\centering
		\includegraphics[width=.75\linewidth]{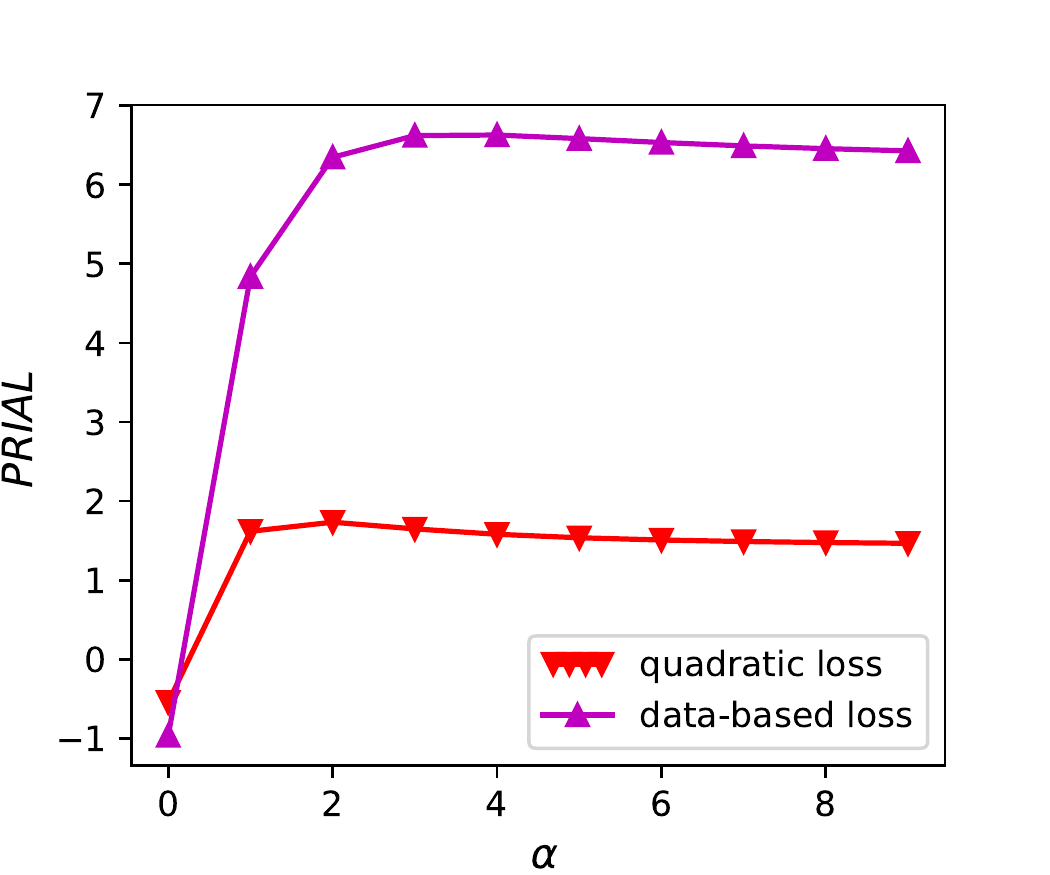}
		\caption{}
	\end{subfigure}
	\caption{PRIAL's of $\hat{\Sigma}_{\alpha,b_0}$ under data--based loss and PRIAL's of  $\hat{\Sigma}_{\alpha,b_1}$ under quadratic loss. The non--invertible case  is considered, with $(p,m)=(20,10)$, for the structures ${\rm (i)}$ and ${\rm (ii)}$  of $\Sigma$ under the Gaussian distribution.}
	\label{fig2}
\end{figure}

\newpage
\section{Conclusion and perspective}
For a wide class of elliptically symmetric distributions, we provide a large class of estimators of the scale matrix $\Sigma$ of the elliptical multivariate linear model \eqref{linear.model} which improve over the usual estimators $a\,S$. We highlight that the use of the data--based loss \eqref{loss} is more attractive than the use of the classical quadratic loss \eqref{quadratic.loss}. Indeed, \eqref{loss}  brings more improved estimators and their improvement is valid within a larger class of distributions. This means that \eqref{loss} is more discriminant than \eqref{quadratic.loss} to exhibit improved estimators.

%
%
%
%

While in \eqref{Delta.SG.2.S}  the risk difference between  $\hat\Sigma_{J}=a_{o}(S+J)$ with $J=SS^{+}G(Z,S)$ and $\hat{ \Sigma}_{a_o}=a_o\,S$, the dominance result in Theorem \ref{risk.diff.} is given for a correction matrix $G(Z,S)=HL\Psi(L)H^{\T}$ which depends only on $S$. Recently,  \cite{tsukuma2016} consider, in the Gaussian case, alternative estimators where $G(Z,S)$ depends on $S$ and on the information contained in the sample mean $Z$. This class of estimators merits future investigations in an elliptical setting. 


\section{Appendix}
\numberwithin{equation}{section}
\numberwithin{lemma}{section}
\numberwithin{proposition}{section}
\numberwithin{corollary}{section}

%
We give in the following corollary  an adaptation of Lemma \eqref{Stein-Haff.id} to an orthogonally invariant matrix function $G$, that is, of the form $G=H\,L\,\Phi(L)\,H^{\T}$ where $\Phi(L)={\rm diag}(\phi_1,\dots,\phi_r)$ with $\phi_i=\phi_i(L)$ ($i=1,\dots,r$) is differentiable function of $L$

\begin{corollary}\label{steinhaff.oiv}
	Let $\Phi(L)={\rm diag}(\phi_1,\dots,\phi_r)$ where $\phi_i=\phi_i(L)$ ($i=1,\dots,r$) is differentiable function of $L$. Assume that $\Exp{} \big[| \tr (\Sigma^{-1} H\,L\,\Phi(L)\,H^{\T}) |\big ] < \infty$. Then we have
	\begin{align*}
		\Exp{}\big[\tr (\Sigma^{-1}\,H L\,\Phi(L)\,H^{\T}\,)\big] 
		=K^{*}\,\Exp{*}\left[\sum_{i=1}^{r} \big(	(v-r +1 )\,\phi_i + 2\,l_i\,\frac{\partial \phi_i}{\partial l_i} 
		+\sum_{j\neq i}^{r}\frac{l_i\,\phi_i-l_j\,\phi_j}{l_i -l_j}\,\big) \right]\,.
	\end{align*}
\end{corollary}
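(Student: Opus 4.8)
The plan is to specialize the Stein--Haff type identity of Lemma~\ref{lemma.id.hs} to the orthogonally invariant matrix function $G=H\,L\,\Phi(L)\,H^{\T}$ and then to compute explicitly the Haff--operator term on the right--hand side of \eqref{Stein-Haff.id}. First I would observe that, since $H$ is $p\times r$ semi--orthogonal, $SS^{+}=H\,H^{\T}$ is the orthogonal projector onto the column space of $H$, so that $SS^{+}G=H\,H^{\T}H\,L\,\Phi(L)\,H^{\T}=H\,L\,\Phi(L)\,H^{\T}=G$. Hence the hypothesis $\Exp{}\big[|\tr(\Sigma^{-1}H\,L\,\Phi(L)\,H^{\T})|\big]<\infty$ is exactly the integrability assumption of Lemma~\ref{lemma.id.hs}, the left--hand side of \eqref{Stein-Haff.id} equals $\Exp{}\big[\tr(\Sigma^{-1}H\,L\,\Phi(L)\,H^{\T})\big]$ (the quantity we want), and the weak differentiability requirement holds because the $\phi_i$ are differentiable in $L$ and the spectral map $s\mapsto(L,H)$ is smooth off the Lebesgue--null set of coincident eigenvalues. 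Writing moreover $S=H\,L\,H^{\T}$ and $S^{+}=H\,L^{-1}\,H^{\T}$, one gets $S^{+}G=H\,\Phi(L)\,H^{\T}$, so that the ``simple'' term in \eqref{Stein-Haff.id} is $(m-r-1)\,\tr(S^{+}G)=(m-r-1)\sum_{i=1}^{r}\phi_i$.

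The heart of the proof is then the evaluation of $2\,\tr\big(SS^{+}\,{\cal D}_s\{SS^{+}G\}^{\T}\big)=2\,\tr\big(H\,H^{\T}\,{\cal D}_s\{H\,L\,\Phi(L)\,H^{\T}\}\big)$, where symmetry of $G$ has been used. For this I would invoke the classical perturbation formulas for the spectral decomposition of $S$: at a point with distinct eigenvalues, $\partial l_k/\partial S_{ij}=H_{ik}H_{jk}$, while the derivatives of the eigenprojectors produce the usual interaction weights $(l_k-l_\ell)^{-1}$ attached to the off--diagonal eigenvector directions (cf. \cite{Stein1977}, \cite{haff1979}, \cite{KubokawaSrivastava2008}). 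Substituting these into ${\cal D}_s$, applying the product rule to the three factors of $H\,L\,\Phi(L)\,H^{\T}$, and collecting terms, I expect to obtain
\begin{align*}
	2\,\tr\big(H\,H^{\T}\,{\cal D}_s\{H\,L\,\Phi(L)\,H^{\T}\}\big)
	=\sum_{i=1}^{r}\Big((p-r+2)\,\phi_i+2\,l_i\,\frac{\partial\phi_i}{\partial l_i}
	+\sum_{j\neq i}^{r}\frac{l_i\,\phi_i-l_j\,\phi_j}{l_i-l_j}\Big)\,,
\end{align*}
where the $2\,\phi_i$ comes from differentiating the middle factor $L$, and the $(p-r)\,\phi_i$ is the contribution of the $p-r$ directions orthogonal to the column space of $H$ (a term that is vacuous when $S$ is invertible, i.e. $r=p$). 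Adding the $(m-r-1)\sum_i\phi_i$ term and using the elementary identity $p+m=v+r$, so that $(p-r+2)+(m-r-1)=p+m-2r+1=v-r+1$, the coefficient of $\phi_i$ collapses to $v-r+1$; multiplying by $K^{*}$ and taking the $\Exp{*}$ expectation then gives exactly the asserted identity.

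The main obstacle is the bookkeeping in this Haff--operator computation. One has to track carefully (i) the derivatives of the eigenvectors, whose contractions with $H\,H^{\T}$ and with the diagonal matrix $L\,\Phi(L)$ generate both the diagonal term $2\,l_i\,\partial\phi_i/\partial l_i$ and the symmetric off--diagonal sums $\sum_{j\neq i}(l_i\phi_i-l_j\phi_j)/(l_i-l_j)$, and (ii) the fact that in the non--invertible case $r<p$ the projector $SS^{+}=H\,H^{\T}$ is not the identity, so that the $p-r$ missing eigen--directions contribute an extra $(p-r)\,\phi_i$ rather than being absent; treating both cases through a single formula is precisely what the notation $v=\max\{p,m\}$, $r=\min\{p,m\}$ is designed to absorb. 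Once this term is in hand, only algebraic simplification remains.
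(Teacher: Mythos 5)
Your proposal is correct and follows essentially the same route as the paper: specialize Lemma \ref{lemma.id.hs} to $G=H\,L\,\Phi(L)\,H^{\T}$ (noting $SS^{+}G=G$ and $S^{+}G=H\,\Phi(L)\,H^{\T}$) and evaluate the Haff-operator term by spectral calculus, arriving at the coefficient $(p-r+2)+(m-r-1)=v-r+1$. The only difference is that the paper does not redo the perturbation computation but cites Lemma A.4.2 of \cite{HADDOUCHE2021104680}, which gives ${\cal D}_s\{H\,L\,\Phi(L)\,H^{\T}\}=H\,\Phi^{(1)}(L)\,H^{\T}+\tfrac{1}{2}\tr(\Phi(L))(I_p-HH^{\T})$; after left-multiplication by the projector $HH^{\T}$ the second term vanishes and your claimed formula for $2\,\tr(HH^{\T}{\cal D}_s\{\cdot\})$ is exactly $2\,\tr(\Phi^{(1)}(L))$, so the $(p-r)\,\phi_i$ surplus in fact sits inside $\phi_i^{(1)}$ rather than coming from the retained orthogonal-complement term.
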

\begin{proof}[\bf Proof]
Let $G=H\,L\,\Phi(L)\,H^{\T}$, $S^{+}=H\,L^{-1}\,H^{\T}$ and $SS^{+}=H\,H^{\T}$. Then,
$$SS^{+}G=H\,H^{\T}H\,L\,\Phi(L)\,H^{\T}=H\,L\,\Phi(L)\,H^{\T}= G,$$ since $H$ is semi--orthogonal.
Assuming that $\Exp{} \big[| \tr (\Sigma^{-1} H\,L\,\Phi(L)\,H^{\T}) | \big] < \infty$, we have from Lemma \ref{lemma.id.hs}  
\begin{align}\label{tool.c1.1}
\Exp{}\big[\tr \big(\Sigma^{-1}\,H L\,\Phi(L)\,H^{\T}\,\big)\big] 
=
K^{*}\Exp{*} \big[2\,\tr\big(H\,H^{\T}{\cal D}_s\{H\,L\,\Phi(L)\,H^{\T}\}\big) + (m-r-1)\,\tr\big(	H\,\Phi(L)\,H^{\T} \big)\big	]\,.
\end{align}

Firstly, using Lemma A.4.2 in \cite{HADDOUCHE2021104680}, we have 
\begin{align}
		&{\cal D}_s \big\{H\,L\,\Phi(L)\, H^{\T}\big\} 
		= H \Phi^{(1)}(L)H^{\T } + \frac{1}{2} \tr\big(\Phi(L)\big)\big(I_p - HH^{\T }\big)\,,\label{tool.c1.3} \\ 
		\intertext{where $\Phi^{(1)}(L)={\rm diag}(\phi^{(1)}_1,\dots,\phi^{(1)}_r)$, with}
		&\phi^{(1)}_i
		= \frac{1}{2}(p- r +2 )\,\phi_i + l_i\,\frac{\partial \phi_i}{\partial l_i} 
		+\frac{1}{2}\sum_{j\neq i}^{r}\frac{l_i\,\phi_i-l_j\,\phi_j}{l_i -l_j}\,.
		\label{tool.c1.4}
\end{align}
for $i=1\dots r$. 

Secondly, using the fact that $H^{\T}H=I_r$, we have from \eqref{tool.c1.3} 
\begin{align}\label{tool.c1.2}
		H\,H^{\T}{\cal D}_s \big\{H\,L\,\Phi(L)\, H^{\T}\big\} 
		=H\,\Phi^{(1)}(L)\,H^{\T\,}\,.
\end{align}
Then, putting \eqref{tool.c1.2} in \eqref{tool.c1.1}, we obtain 
\begin{align*}
		\Exp{}\big[\tr \big(\Sigma^{-1}\,H L\,\Phi(L)\,H^{\T}\,\big)\big] 
		=
		K^{*}\Exp{*} \big[2\,\tr\big(\Phi^{(1)}(L)\big) + (m-r-1)\,\tr\big(\Phi(L) \big)\big]\,.
\end{align*}

Finally, using \eqref{tool.c1.4}, we have 
	\begin{align*}
		\Exp{}\big[\tr \big(\Sigma^{-1}\,H L\,\Phi(L)\,H^{\T}\,\big)\big] 
		=K^{*}\,\Exp{*}\left[\sum_{i=1}^{r} \big(	(p+m -2r +1 )\,\phi_i + 2\,l_i\,\frac{\partial \phi_i}{\partial l_i} 
		+\sum_{j\neq i}^{r}\frac{l_i\,\phi_i-l_j\,\phi_j}{l_i -l_j}\,\big) \right]\,,
	\end{align*}
	where $(p+m -2r +1 ) = (v-r)$ \,.
\end{proof}

\begin{proof}[\bf The optimal constat $a_{o}$ in \eqref{optimal.estimator}]
Let $\hat{\Sigma}_{a}= a\,S$ where $a>0$. Assume that the expectations $\Exp{}\big[\tr\big(\Sigma^{-1}S\big)\big]$ and $\Exp{}\big[\tr\big(\Sigma\, S^{+}\big)\big]$ are finite. Then, the risk of $\hat{\Sigma}_{a_{o}}$ relating to the data-based loss \eqref{loss} is given by
	\begin{align}
		R\big(\hat{\Sigma}_a,\Sigma)
		= 
		\Exp{}\big[\tr\big(S^{+}\Sigma\,(\Sigma^{-1}\hat{\Sigma}_a - I_p  )^2	\big)\big]
		=
		a^{2}\Exp{}\big[\tr\big(\Sigma^{-1}SS^{+}S\big)\big] -2\,a\,\Exp{}\big[ \tr \big(SS^{+}\big)	\big]+ \Exp{}\big[ \tr\big(S^{+}\,\Sigma \big)\big] \label{tool.p1.1}\,.
	\end{align}
	Applying the Stein-Haff type identity in Corollary \eqref{steinhaff.oiv}, with $\Psi(L)=I_r$, to the first term in the right-hand side of \eqref{tool.p1.1}, we obtain
	\begin{align} \label{tool.p1.2}
		\Exp{}\big[\tr\big(\Sigma^{-1}SS^{+}S\big)\big]
		&= \Exp{}\big[\tr\big(\Sigma^{-1}H\,L\,H^{\top}\big)\big]
		=K^{*}\,\Exp{*}\left[\sum_{i=1}^{r} \big((v-r +1 )\, 
		+\sum_{j\neq i}^{r}\frac{l_i -l_j\,}{l_i -l_j}\,\big) \right]\, \nonumber \\
		&=K^{*}\,\left[r(v-r+1) + r(r-1)\right]
		=K^{*}r\,v\,.
	\end{align}
	Now, using the fact that $\tr(S^{+}\,S)=\tr(H\,H^{\T})= r$ and thanks to \eqref{tool.p1.2}, we have 
	\begin{align*}
		R\big(\hat{\Sigma}_a,\Sigma\big)=a^2\,K^{*}\,r\,v - 2\,a\,r + \Exp{}\big[	\tr\big(S^{+}\Sigma \big) \big]\,.
	\end{align*}
	Therefore, choosing $a= 1/K^{*}\,v$ is optimal under the risk \eqref{risk}.
\end{proof}
\begin{proof}[\bf Proof of Theorem \ref{risk.diff.}]
	Let $\hat{\Sigma}_\Psi= a_{o}\,\big(S + H\,L\,\Psi(L)\,H^{\T})$ where  $\Psi(L)={\rm diag}(\psi_1,\dots,\psi_r)$ such that $\psi_i=\psi_i(L)$ ($i=1,\dots,r$) is differentiable function of $L$ and  $\tr\big(\Psi(L)\big)\geq \lambda >0$. 
	Hence, using the fact that $H^{\top}H=I_r$, the involving terms in the risk difference \eqref{Delta.SG.1} becomes
	\begin{align*}
		J=SS^{+}G=G=HL\Psi(L)H^{\top}
		\,\,
		\text{and}
		\,\,
		S^{+}G= H\Psi(L)H^{\top}\,.
		\end{align*} 
	Then, the risk difference between  $\hat{\Sigma}_\Psi$  and $\hat{\Sigma}_{a_o}$ is given by
	\begin{align}\label{tool.t21.2}
		\Delta(\Psi)= 
		a_{o}^{2}\,\Exp{}\big[\tr\big(\Sigma^{-1}\,H\,L\,(2\,\Psi +\,\Psi^2 )\,H^{\T}\big	)  \big] - 2\,a_{o}\Exp{}\big[\tr\big(\Psi\big)\big]\,.
	\end{align}
	
	Now, applying the Stein-Haff type identity in Corollary \eqref{steinhaff.oiv} to the first term in the right hand side of \eqref{tool.t21.2}, for $\Phi = 2\,\Psi + \Psi^{2}$, we have
	\begin{align*}
		\Delta(\Psi)
		&=
		a_{o}^2\,K^{*}\,\Exp{*}\left[\sum_{i=1}^{r} \big\{	(v-r +1 )\,(2\,\psi_i + \psi_i^2)
		+ 2\,l_i\,\frac{\partial (2\,\psi_i + \psi_i^2)}{\partial l_i} 
		+\sum_{j\neq i}^{r}\frac{l_i\,(2\,\psi_i + \psi_i^2)-l_j\,(2\,\psi_j + \psi_i^2)}{l_i -l_j}\,\big \} \right] 
		\nonumber
		\\
		&\hspace{2.cm} 
		- 2\,a_{0}\Exp{}\big[\tr\big(\Psi\big)\big]\,.
	\end{align*}
	Therefore, using the fact that $\tr(\Psi)\geq \lambda>0$, an upper bound of the risk difference $\Delta(\Psi)$ is given by
	\begin{align*}
		\Delta(\Psi) 
		&\leq
		a_{o}^2\,K^{*}\,\Exp{*}\left[\sum_{i=1}^{r} \left\{ 2\,(v-r+1)\,\psi_i
		+ (v-r+1)\,\psi_i^2 
		+4\,l_i\,(1+\psi_i )\frac{\partial \psi_i}{\partial l_i}
		\right.\right. \nonumber	\\
		&\hspace{2.cm}
		+\sum_{j\neq i}^{r}\frac{l_i\,(2\,\psi_i + \psi_i^2)-l_j\,(2\,\psi_j + \psi_i^2)}{l_i -l_j}\,
		\left.\left.
		-2(a_o\,K^{*})^{-1}\lambda \right \}\right]\,,
	\end{align*} 
where  $(a_o\,K^{*})^{-1} = v $.
\end{proof}
\begin{proof}[\bf Improvement condition \eqref{improvement.c} of  alternative estimators  in \eqref{haff.estimators}]
	Let consider the class of alternative estimators $\hat{\Sigma}_{\alpha,b}$  in \eqref{haff.estimators}. Then, applying Theorem  \ref{risk.diff.}, an upper bound of the risk difference between $\hat{\Sigma}_{\alpha,b}$  and $\hat{\Sigma}_{a_o}$ is given by
	\begin{align}\label{hf.risk}		
		\Delta(\Psi) \leq a_{o}^{2}\,K^{*}\,\Exp{*}\big( g (\Psi)\big)\,,
	\end{align}
	where the integrand term in \eqref{tool.t12.3} becomes
	\begin{align*}
		g(\Psi)= g_1(\Psi) + g_2(\Psi)
	\end{align*}
	with
	\begin{align*}
		g_1(\Psi) = -2\,(r -1 )\,b\,\sum_{i=1}^{r} \frac{\,l^{-\alpha}_i}{\tr(L^{-\alpha})} 
		+ (v - r +1 )\,b^{2}\,\sum_{i=1}^{r}\frac{\,l^{-2\alpha}_i}{\tr^{2}(L^{-\alpha})}\,,
	\end{align*}
	since $\tr\big(\Psi(L)\big)=b$, and
	\begin{align*}
		&
		g_2(\Psi)
		=4l_i b\left(1+b\frac{l^{-\alpha}_i}{\tr(L^{-\alpha})}\right)\frac{\partial}{\partial l_{i}} \left(\frac{l^{-\alpha}_i}{\tr(L^{-\alpha})} \right) 
		+ \frac{2b}{\tr(L^{-\alpha})}\sum_{i=1}^{r} \sum_{j\neq i}^{r} \frac{l_i^{1-\alpha} - l_j^{1-\alpha}}{l_i -l_j} 
		\nonumber \\
		&\hspace{8cm}
		+ \frac{b^2}{\tr^{2}(L^{-\alpha})}\sum_{i=1}^{r} \sum_{j\neq i}^{r} \frac{l_i^{1-2\alpha}- l_j^{1-2\alpha}}{l_i -l_j}\,.
	\end{align*}
	The proof consist to prove that the integrand term $g_2(\Psi)$ is non-positive. To this end, it can be shown that, for $\alpha \geq 1$,
	\begin{align*}
		\sum_{i=1}^{r} \sum_{j\neq i}^{r} \frac{l_i^{1-\alpha} - l_j^{1-\alpha}}{l_i -l_j} 
		= 2\,\sum_{i}^{r} \sum_{j> i}^{r} \frac{l_i^{1-\alpha} - l_j^{1-\alpha}}{l_i -l_j} \leq 0
		\quad{\text{and}}\quad
		\sum_{i=1}^{r} \sum_{j\neq i}^{r} \frac{l_i^{1-2\alpha}- l_j^{1-2\alpha}}{l_i -l_j}
		=  2\,\sum_{i=1}^{r} \sum_{j> i}^{r} \frac{l_i^{1-2\alpha}- l_j^{1-2\alpha}}{l_i -l_j} <0\,.
	\end{align*}
	since $L={\rm diag}(l_1>,\dots,>l_r)$. Then 
	\begin{align*}
		g_2(\Psi) 
		&\leq
		4\,l_i\,b\,\left(1+b\,\frac{\,l^{-\alpha}_i}{\tr(L^{-\alpha})}\right)\,\frac{\partial}{\partial l_{i}} \left(\frac{\,l^{-\alpha}_i}{\tr(L^{-\alpha})} \right) 
		=4\,b\,\alpha\,\frac{l_i^{-\alpha}}{\tr(L^{-\alpha})}\left(1+b\,\frac{\,l^{-\alpha}_i}{\tr(L^{-\alpha})}\right) \,\left(\frac{l_i^{-\alpha}}{\tr(L^{-\alpha})}-1\right)\,,
	\end{align*}
	since 
	\begin{align*}
		\frac{\partial}{\partial l_{i}}\left(\frac{\,l^{-\alpha}_i}{\tr(L^{-\alpha})} \right)
	=\alpha \frac{l_i^{-\alpha-1}}{\tr(L^{-\alpha})}\left (\frac{l_i^{\,-\alpha}}{\tr(L^{-\alpha})} - 1 \right) \,.
	\end{align*}
	Therefore, since $l_i^{-\alpha}\leq \tr(L^{-\alpha})$, the integrand term $g_2(\Psi) \leq0$. Then
	\begin{align*}
		g(\Psi)\leq g_1(\Psi)=  -2\,(r -1 )\,b\,
		+ (v - r +1 )\,b^{2}\,\frac{\tr(L^{-2\alpha})}{\tr^{2}(L^{-\alpha})}\,.
	\end{align*}
	Now, using the fact that $\tr(L^{-2\alpha})\leq \tr^{2}(L^{-\alpha})$, we have 
	\begin{align*}
		g(\Psi)
		\leq
		-2\,(r -1 )\,b
		+ (v - r +1 )\,b^{2}\,.
	\end{align*}
	since $b>0$. Hence, an upper bound for the risk difference  in \eqref{hf.risk} is given by 
	\begin{align*}
		\Delta(\Psi)\leq a_{o}^{2}\,b\,K^{*}\,\Exp{*}\big[ -2\,(r -1 )\,
		+ (v - r +1 )\,b\big]\,.
	\end{align*}
	Therefore,  $\hat{\Sigma}_{\alpha,b}$  improves over $\hat{\Sigma}_{a_o}$ under the data-based loss \eqref{loss} as soon as   $0 < b \leq b_0= 2\,(r -1 ) /(v - r +1 )$\,.
\end{proof}

\printcredits
\newpage
\bibliographystyle{cas-model2-names}

\bibliography{FMH2020_DB_biblio}

\end{document}